\newtheorem{thm}{Theorem}[section]
\newtheorem{cor}[thm]{Corollary}
\newtheorem{lem}[thm]{Lemma}
\theoremstyle{definition}
\newtheorem{defn}[thm]{Definition}
\newtheorem{rem}[thm]{Remark}
\numberwithin{equation}{section}
\begin{document}


\baselineskip=17pt



\title[Additive complements for a given asymptotic density]{Additive complements for a given asymptotic density}

\author[A. Faisant]{Alain Faisant}
\address{Institut Camille Jordan\\ Universit\'{e} de Saint-\'{E}tienne\\
42023 Saint-\'{E}tienne Cedex 2, France}
\email{faisant@univ-st-etienne.fr}

\author[G. Grekos]{Georges Grekos}
\address{Institut Camille Jordan\\ Universit\'{e} de Saint-\'{E}tienne\\
42023 Saint-\'{E}tienne Cedex 2, France}
\email{grekos@univ-st-etienne.fr}

\author[R. K. Pandey]{Ram Krishna Pandey}
\address{Department of Mathematics\\ Indian Institute of Technology\\
Roorkee--247667, India}
\email{ramkpfma@iitr.ac.in}

\author[S. T. Somu]{Sai Teja Somu}
\address{School of Mathematics\\ Tata Institute of Fundamental Research\\
Mumbai--400005, India}
\email{somuteja@gmail.com}

\date{April, 9, 2019}

\begin{abstract}
{\bf The first version of this text was written and submitted to a journal on April, 12, 2018. This second version was submitted on April, 9, 2019.} We investigate the existence of subsets $A$ and $B$ of $\mathbb{N}:=\{0,1,2,\dots\}$ such that the sumset $A+B:=\{a+b~;a\in A,b\in B\}$ has given asymptotic density. We solve the particular case in which $B$ is a given finite subset of $\mathbb{N}$ and also the case when $B=A$ ; in the later case, we generalize our result to $kA:=\{x_1+\cdots+x_k: x_i\in A, i=1,\dots,k\}$ for an integer $k\geq2.$
\end{abstract}

\subjclass[2010]{Primary 11B05; Secondary 11B13}

\keywords{asymptotic density, additive complements}

\maketitle

\section{Introduction}
The purpose of this paper is to introduce a new, up to our knowledge, subject of research, resolving a few particular cases.

Two subsets, not necessarily distinct, $A$ and $B$ of $\mathbb{N}:=\{0,1,2,\dots\}$ are called {\it additive complements} if $A+B$ contains all, except finitely many, positive integers; that is, if the set $\mathbb{N}\setminus(A+B)$ is finite. From the huge literature on this topic, we just mention the papers \cite{L}, \cite{Nar} and \cite{D} among the initial ones, and the papers \cite{FC} and \cite{R} among the last ones. The reader may find there information and more references. A set $A$ being additive complement of itself, that is a set $A$ such that $\mathbb{N}\setminus(2A)$ be finite (where we put $2A:=A+A),$ is called an {\it asymptotic basis} of {\it order} 2.

In this paper we are interested in what happens when one asks that the density of the sumset $A+B$ is equal to a given value $\alpha,$ $0\leq\alpha\leq 1.$ As density concept we use the asymptotic density, defined below.
\begin{defn}\label{definition1}
Let $X$ be a subset of $\mathbb{N}$ and  $x$ a real number. For $x\geq1,$ we put $X(x):=|X\cap[1,x]|.$ For $x<1,$ we put $X(x)=0.$  We define the {\it asymptotic} (also called {\it natural}) {\it  density} of $X$ as
$$dX:=\lim_{x\rightarrow+\infty}{{X(x)}\over x}$$
provided that the above limit exists. The {\it lower} and the {\it upper asymptotic densities}, denoted by
${\underline d}X$ and ${\overline d}X,$ respectively, are defined by taking in the above formula the lower and the upper limits, respectively, which always exist.
\end{defn}

In the classical situation the existence of additive complements is obvious and the main problem is to find ``thin" subsets of $\mathbb{N}$ being additive complements or asymptotic bases. Our first question in this study is to guarantee the existence of such sets $A$ and $B$. Precisely, in this paper, we establish the existence of $A$ for a given finite set $B$. Furthemore, we also consider the case $B = A$, in this existence.  Once the existence has been established, two questions naturally arising are to find {\it thin} (similarly {\it thick}) sets verifying the required conditions.\medskip

\paragraph{Notations} $|S|$ denotes, according to the context, either the cardinality of the finite set $S$ or the length of the interval $S.$ All small letters, except $f,g, c, d, x, \alpha$ and $\theta,$ represent nonnegative integers. Let $x$ be a real number. We denote by $\left \lfloor{x}\right \rfloor$ the ``integer part" of $x$ and by $\{x\}$ the ``fractional part" of $x.$ Thus  $$x=\left \lfloor{x}\right \rfloor + \{x\}$$
 where $\left \lfloor{x}\right \rfloor$ is an integer and $0\leq\{x\}<1,$ this writing being {\it unique}.\\

In Section 2, we indicate or prove some properties of asymptotic density and in Section 3, we prove the next theorem.

\begin{thm}\label{thm1}
Let $\alpha$ be a real number, $0\leq\alpha\leq1,$ and $B$ a finite subset of $\mathbb{N}.$ Then there exists a set $A\subset \mathbb{N}$ such that $d(A+B)=\alpha.$
\end{thm}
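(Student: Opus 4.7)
The trivial cases $\alpha=0$ and $\alpha=1$ are disposed of by $A=\emptyset$ and $A=\mathbb N$ respectively, so I focus on $0<\alpha<1$. Since translating $B$ by a constant only translates $A+B$, preserving asymptotic density, I may assume $\min B=0$ and write $M=\max B$.

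The plan is to construct $A$ as a disjoint union of intervals
\[
A=\bigsqcup_{j\geq 1}[u_j,v_j]
\]
with the two spacing conditions $v_j-u_j\geq M$ and $u_{j+1}-v_j\geq M+2$. The first ensures that for each $j$ the shifts $[u_j+b,v_j+b]$ with $b\in B$ chain together to cover $[u_j,v_j+M]$ entirely, while the second keeps distinct blocks separated in the sumset. Together they give
\[
A+B=\bigsqcup_{j\geq 1}[u_j,v_j+M].
\]
The decisive structural feature of this form is that $(A+B)(x)/x$ is monotone on each piece: while $x$ runs through a block the counting function grows at rate~$1$, so the ratio increases and, if the block were continued indefinitely, would tend to~$1$; while $x$ runs through a gap the counting function is frozen, so the ratio decreases toward~$0$.

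Using this monotonicity I will define $(u_j)$ and $(v_j)$ by induction so as to pin the ratio close to $\alpha$ on two interleaved witness sequences. Given $u_1,\dots,u_j$, take $v_j$ to be the least integer $\geq u_j+M$ for which $(A+B)(v_j+M)/(v_j+M)\geq\alpha$, and then take $u_{j+1}$ to be the least integer $\geq v_j+M+2$ for which $(A+B)(u_{j+1}-1)/(u_{j+1}-1)\leq\alpha$. The asymptotics $1>\alpha$ on blocks and $0<\alpha$ on gaps guarantee that these minima exist. Setting $p_j=v_j+M$ and $q_j=u_{j+1}-1$, the minimality forces both the overshoot $(A+B)(p_j)/p_j-\alpha$ and the undershoot $\alpha-(A+B)(q_j)/q_j$ to be $O(M/p_j)$ and $O(1/q_j)$, hence to tend to zero. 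Since on every block and on every gap the ratio is monotone, for any $x$ lying between two consecutive witness points the ratio is squeezed between their values; therefore $(A+B)(x)/x\to\alpha$, i.e.\ $d(A+B)=\alpha$.

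The only point requiring genuine attention is verifying that the inductive step never stalls: extending the current block long enough must actually push the ratio strictly above $\alpha$, and extending the ensuing gap long enough must actually push it strictly below $\alpha$. Both reductions rest on the block/gap asymptotics just noted. One must also pick the starting index $u_1$ large enough that the initial transient is negligible and the monotonicity setup applies from the outset; after that, everything reduces to routine bookkeeping with the counting function.
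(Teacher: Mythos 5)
Your proof is correct, and it takes a genuinely different route from the paper's. The paper builds $A$ one element at a time by a greedy rule: $a_{m+1}$ is the least integer exceeding $a_m$ whose insertion keeps $((A_m\cup\{a\})+B)(n)\le\alpha n$ for all relevant $n$. That makes ${\overline d}(A+B)\le\alpha$ immediate, but the reverse inequality ${\underline d}(A+B)\ge\alpha$ then requires a contradiction argument: if the ratio dropped below $\alpha-\varepsilon$ at infinitely many points $N_t\notin A$, greediness says $N_t$ could have been admitted, and comparing the resulting counting estimates forces $\varepsilon N_t<|B|+\max B$, impossible for large $t$. You instead impose global structure from the outset: $A$ is a union of intervals of length at least $M$ separated by gaps of length at least $M+2$, so that $A+B$ is itself an explicit disjoint union of intervals $[u_j,v_j+M]$, and the entire density computation reduces to the elementary fact that $(A+B)(x)/x$ increases along blocks and decreases along gaps, combined with the minimality of your stopping times pinning the ratio near $\alpha$ at the turning points. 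The details you defer do check out: the stopping times exist because the ratio tends to $1>\alpha$ on an indefinitely prolonged block and to $0<\alpha$ on an indefinitely prolonged gap, and minimality gives, e.g., $(A+B)(p_j)\le\alpha(p_j-1)+1$ when $v_j>u_j+M$ and $(A+B)(p_j)\le\alpha(u_j-1)+2M+1\le\alpha p_j+(1-\alpha)(2M+1)$ when $v_j=u_j+M$, so the overshoot is indeed $O(M/p_j)$; the undershoot bound is analogous. Your approach buys a direct, contradiction-free verification with an explicit error term $O(M/x)$ away from the initial segment, and your reduction to $\min B=0$ (translating the sumset rather than $A$) is slightly cleaner than the paper's lemma; what the paper's greedy construction buys instead is extremality --- the authors remark in Section 5 (Q1) that their set appears to be the thickest possible $A$ with $d(A+B)=\alpha$, a feature your interval construction does not claim.
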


Let us now suppose that $\alpha$ is given and $A=B.$ Our goal was to prove the existence of a set $A$ such that $d(2A)=\alpha.$ The proof(s) can be generalized to sumsets of more summands $A$ ; precisely, we prove the following more general result, showing that, for the constructed set $A,$ the density of the sumsets $jA,$ $1\leq j\leq k,$ increases regularly with $j.$ Proofs are of different form, according to the nature of $\alpha$ : rational or irrational. Here is the formulation of the theorem proved in Section 4.

\begin{thm}\label{thm2}
Let $\alpha$ be a real number, $0\leq\alpha\leq1,$ and $k$ an integer, $k\geq2.$ Then there exists a subset $A$ of $\mathbb{N}$ such that for every $j,$ $1\leq j\leq k,$ one has $$d(jA)={{j\alpha}\over k}.$$
In particular, $ d(kA)=\alpha.$
\end{thm}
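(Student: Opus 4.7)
The construction of $A$ will be explicit, with the argument splitting according to the nature of $\alpha$ (rational or irrational), as signaled by the authors.

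\emph{A simple ansatz and its limitation.} The cleanest starting point is $A = \{0\} \cup C$ with $C$ contained in the single residue class $1 + k\mathbb{N}$. Then $rC \subset r + k\mathbb{N}$, so the sumsets $rC$ are pairwise disjoint across residues $r = 0, 1, \ldots, k-1$, and $jA = \{0\} \sqcup C \sqcup 2C \sqcup \cdots \sqcup jC$ gives
$$d(jA) = \sum_{r=1}^{j} d(rC) \qquad (1 \le j \le k-1),$$
with an analogous expression for $j = k$. Writing $C = 1 + kD$ so that $d(rC) = d(rD)/k$, the target $d(jA) = j\alpha/k$ reduces to $d(rD) = \alpha$ for $r = 1, \ldots, k$. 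This is achievable for $\alpha = 1/c$ (take $D = c\mathbb{N}$), but a short pigeonhole argument (for $n$ large, $|D\cap[0,n]|$ and $|(n-D)\cap[0,n]|$ both exceed $n/2$) shows $d(D) > 1/2 \Rightarrow d(2D) = 1$. Hence the single-residue ansatz fails for $\alpha > 1/2$, and $A$ must be allowed to meet several residue classes modulo $k$.

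\emph{Rational case.} For $\alpha = p/q$, I would take $A = S + N\mathbb{N}$ periodic, with $N$ a suitable multiple of $kq$ and $S \subset \{0, 1, \ldots, N-1\}$ chosen so that $|S| = \alpha N/k$ and $|jS \bmod N| = j\alpha N/k$ for $j = 1, \ldots, k$. Periodicity then gives $d(jA) = |jS|/N = j\alpha/k$ on the nose. The heart of the argument is the construction of $S$: one takes an arithmetic progression together with carefully placed outliers, tuned so that each sumset $jS$ acquires exactly $|S|$ new residues in $\mathbb{Z}/N\mathbb{Z}$. A prototypical example is $S = \{0, 1, 3\}$, for which one checks $jS = \{0, 1, \ldots, 3j\} \setminus \{3j-1\}$, giving $|jS| = 3j$ for every $j$ with $3j < N$, and hence realizing $\alpha = 3k/N$ for any $N > 3k$. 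Varying the cardinality of $S$ and the placement of its outliers covers the remaining rational values of $\alpha$.

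\emph{Irrational case.} Approximate $\alpha$ by a sequence of rationals $\alpha_n \to \alpha$ and build $A = \bigcup_n A_n$ block-by-block. Partition $\mathbb{N}$ into rapidly growing windows $I_n = [L_n, L_{n+1})$, for example $L_n = k^n L_0$, and place inside $I_n$ a translated/scaled copy of the periodic construction for $\alpha_n$. Because $L_{n+1}/L_n$ grows, sums of $j \le k$ elements coming entirely from windows $I_m$ with $m < n$ stay below $L_{n+1}$ and add only $o(L_n)$ elements at scale $L_n$; the local density of $jA$ inside $I_n$ is therefore $\approx j\alpha_n/k$. Summing the contributions over $n$ and using the density-computation lemmas from Section~2 yields $d(jA) = \lim_n j\alpha_n/k = j\alpha/k$.

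\emph{Main obstacle.} The principal difficulty lies in the rational case: for arbitrary $p, q, k$, producing $S \subset \mathbb{Z}/N\mathbb{Z}$ of the prescribed cardinality with $|jS \bmod N| = j|S|$ for every $1 \le j \le k$ is a delicate Sidon-like combinatorial existence question, exploiting the freedom in choosing $N$ and the outlier structure of $S$. Once this combinatorial core is in place, the residue-class decomposition, the block-construction for the irrational case, and the verification that the asymptotic density exists are all fairly routine.
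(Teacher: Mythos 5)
Your rational case is essentially the paper's construction, and your ``prototypical example'' $S=\{0,1,3\}$ is exactly the general answer in miniature: for $\alpha=m/n$ the paper takes $N=nk$ and $S=H:=\{0,1,\dots,m-2,m\}$ (an interval with a single hole just below its maximum), so that $jH=\{0,1,\dots,jm-2,jm\}$ has exactly $jm=j|H|$ elements and no wraparound occurs since $k\max H=km<nk$. The ``delicate Sidon-like combinatorial existence question'' you defer to is therefore not delicate at all; the only wrinkle is that this works for $m\geq3$, and $m\in\{1,2\}$ is handled by multiplying numerator and denominator of $\alpha$ by $2$ or $3$. So this half of your proposal is correct in approach but stops just short of the (easy) completion.

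The irrational case, however, has a genuine gap. With windows $I_n=[L_n,L_{n+1})$ and $L_{n+1}=kL_n$, a sum of $j$ elements all drawn from $A\cap[0,L_n)$ can be as large as $jL_n$, so such ``cross-window'' sums land throughout $[L_n,jL_n)\subset I_n$; if $A$ has positive relative density in $I_{n-1}$, the $j$-fold sumset of $A\cap I_{n-1}$ typically occupies a positive proportion of $I_n$, not $o(L_n)$ as you claim. Hence the local density of $jA$ in $I_n$ is contaminated by earlier windows and the limit computation breaks down; fixing it would force you to make the periodic structures in successive windows compatible modulo nested moduli, which reintroduces all the difficulty. The paper sidesteps this with a single global construction: put $\theta=1/\alpha$ and $A=\{\lfloor n\theta\rfloor:\ n\geq1,\ \{n\theta\}<1/k\}$. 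Then $jA\subset\{\lfloor m\theta\rfloor:\{m\theta\}<j/k\}$ because the $j$ fractional parts sum to less than $1$ (no carry), and that containing set has density $j\alpha/k$ by equidistribution of $(n\theta)$ mod $1$; conversely, all but finitely many elements $\lfloor N\theta\rfloor$ with $\{N\theta\}$ in a slightly shrunken interval are shown to lie in $jA$ by subtracting $(j-1)\lfloor m_i\theta\rfloor$ for a suitably chosen $m_i$. I recommend replacing your block construction by an argument of this type.
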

\noindent
We give a  list of open questions and prospects for further research in Section 5.\\
\noindent

\section{Auxiliary results}
We mention next lemma without proof as it is a basic result concerning the density. Although the lemma is elementary in nature, it is quite useful for our study.
\begin{lem}
Let the set $X$ in  Definition \ref{definition1} be infinite, $X=:\{a_1<a_2<\dots\}.$ Then
$${\underline d}X:=\liminf_{x\rightarrow+\infty}{{X(x)}\over x}=\liminf_{k\rightarrow+\infty} {k\over{a_k}},$$ and
$${\overline d}X:=\limsup_{x\rightarrow+\infty}{{X(x)}\over x}=\limsup_{k\rightarrow+\infty} {k\over{a_k}}.$$
\end{lem}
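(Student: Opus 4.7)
Proof proposal. The plan is to relate the function $x \mapsto X(x)/x$ (defined on the reals) to the sequence $k \mapsto k/a_k$ (defined on the integers) by comparing values on the intervals $[a_k, a_{k+1})$, where $X$ is constant.

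First I would record the basic identity: since $X = \{a_1 < a_2 < \cdots\}$, we have $X(a_k) = k$ for every $k \geq 1$, and more generally $X(x) = k$ whenever $a_k \leq x < a_{k+1}$. In particular, the sequence $\bigl(k/a_k\bigr)_{k\geq 1}$ is the subsequence of $\bigl(X(x)/x\bigr)_{x\to\infty}$ obtained by evaluating at $x=a_k$. This immediately yields the easy inequalities
\[
\liminf_{x\to+\infty}\frac{X(x)}{x}\ \leq\ \liminf_{k\to+\infty}\frac{k}{a_k},\qquad \limsup_{x\to+\infty}\frac{X(x)}{x}\ \geq\ \limsup_{k\to+\infty}\frac{k}{a_k}.
\]

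Next I would prove the reverse inequalities. Given any real $x$ large enough that $x \geq a_1$, let $k=k(x)$ be the unique index with $a_k \leq x < a_{k+1}$; note $k(x)\to\infty$ as $x\to\infty$. Then
\[
\frac{k}{a_{k+1}}\ <\ \frac{X(x)}{x}\ =\ \frac{k}{x}\ \leq\ \frac{k}{a_k}.
\]
Taking the $\liminf$ as $x\to\infty$ of the leftmost expression and using that $k/a_{k+1} = (k+1)/a_{k+1} - 1/a_{k+1}$, combined with $1/a_{k+1}\to 0$, gives
\[
\liminf_{x\to+\infty}\frac{X(x)}{x}\ \geq\ \liminf_{k\to+\infty}\frac{k}{a_{k+1}}\ =\ \liminf_{k\to+\infty}\frac{k+1}{a_{k+1}}\ =\ \liminf_{k\to+\infty}\frac{k}{a_k}.
\]
The symmetric argument, taking $\limsup$ of the rightmost expression, yields the matching upper bound.

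The step I would expect to require the most care is the small shift argument $k/a_{k+1} = (k+1)/a_{k+1}-1/a_{k+1}$, which is needed to rewrite the shifted-index liminf in terms of the original one; the rest is just bookkeeping with intervals on which $X(x)$ is constant. Once both directions are assembled, the two displayed identities follow.
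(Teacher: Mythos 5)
The paper states this lemma without proof (it is introduced as ``a basic result concerning the density''), so there is no argument of the authors to compare against; your proof is correct and is the standard one: squeeze $X(x)/x$ between $k/a_{k+1}$ and $k/a_k$ on each interval $[a_k,a_{k+1})$, and absorb the index shift via $1/a_{k+1}\to 0$. One small caveat: since $X(x):=|X\cap[1,x]|$ does not count the element $0$, your identity $X(a_k)=k$ fails when $0\in X$ (there $X(a_k)=k-1$, and $k/a_k$ is even undefined for $k=1$); the discrepancy is $1/a_k\to 0$, so none of your $\liminf$/$\limsup$ computations are affected, but the opening identity should be stated for $X\subset\mathbb{N}_+$ or adjusted accordingly.
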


As a consequence, we get the next property.

{\bf Property (i)} Let $X$ be a subset of $\mathbb {N}$ and $\theta$ a real number, $\theta\geq1.$ Put $\theta . X:=\{ \left \lfloor{\theta a}\right \rfloor ;a\in X\}.$ Then ${\underline d}(\theta . X)=\theta^{-1}~{\underline d} X$ and ${\overline d}(\theta . X)=\theta^{-1}~{\overline d} X.$ \smallskip

Put
 $\mathbb{N}_+:=\mathbb{N}\setminus\{0\}=\{1,2,\dots\}.$ The uniform distribution of the sequence $(\{n\theta\})_{n\in \mathbb{N}_+}$ when $\theta$ is irrational (\cite{KN}, p. 8; \cite{SP}, p. 2 - 72) amounts to:

{\bf Property (ii)} Let $I$ be a subinterval of $[0,1[$ and $\theta$ an irrational  real number. Then
$$d\{n\in \mathbb{N}_+;\{n\theta\}\in I\}=|I|.$$

As a corollary of the previous properties, we get:

\begin{cor}\label{cor1}
Let $I$ be a subinterval of  $[0,1[$ and $\theta$ an irrational  real number, $\theta>1.$ Then
$$d\{\lfloor{n\theta}\rfloor;n\in\mathbb{N}_+,\{n\theta\}\in I\}=|I|/\theta.$$
\end{cor}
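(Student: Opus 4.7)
The corollary is essentially the composition of the two preceding properties applied in the obvious order, so the plan is short.

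First I would introduce the set $X := \{n \in \mathbb{N}_+ : \{n\theta\} \in I\}$. By Property (ii), since $\theta$ is irrational, $X$ has asymptotic density $|I|$; in particular both $\underline{d}X$ and $\overline{d}X$ equal $|I|$, so the density exists.

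Next I would observe that, with the notation of Property (i), the set appearing in the corollary is exactly $\theta \cdot X = \{\lfloor \theta a \rfloor : a \in X\}$. Applying Property (i) with this $X$ and this $\theta \geq 1$, one obtains
$$\underline{d}(\theta \cdot X) = \theta^{-1}\underline{d}X = |I|/\theta \quad \text{and} \quad \overline{d}(\theta \cdot X) = \theta^{-1}\overline{d}X = |I|/\theta,$$
so $d(\theta \cdot X) = |I|/\theta$, as required.

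There is no real obstacle, because Property (i) already takes care of the only subtle point, namely that the floor map $a \mapsto \lfloor \theta a \rfloor$ behaves well enough for the density to simply scale by $\theta^{-1}$. (One can note in passing that for $\theta > 1$ this map is strictly increasing on $\mathbb{N}_+$, so it is injective and the elements of $\theta \cdot X$ are pairwise distinct, which is what makes the $k/a_k$ formulation of the lemma in Section 2 applicable.) The only thing to be careful about is to invoke Property (i) for both the upper and lower densities so as to conclude that $d(\theta \cdot X)$ exists, rather than just bounding one of them.
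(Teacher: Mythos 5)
Your proof is correct and is exactly the argument the paper intends: the corollary is stated there as an immediate composition of Property (ii) (giving $dX=|I|$ for $X=\{n:\{n\theta\}\in I\}$) with Property (i) applied to $\theta\cdot X$. Your remark that one should apply Property (i) to both the upper and lower densities to conclude existence of $d(\theta\cdot X)$ is the right way to make the deduction precise.
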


\section{Proof of Theorem \ref{thm1}}
If $\alpha$ is 0 or 1, the answer is easy: take, respectively, $A$ to be the set of powers of 2 or $A=\mathbb{N}.$

\noindent
{\tt We shall suppose in the sequel that} $0<\alpha<1.$

\begin{lem}
Without loss of generality, we can suppose that $\min B=0.$
\end{lem}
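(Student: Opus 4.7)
The plan is to reduce the general case to the case $\min B = 0$ by a simple translation argument. Set $b_0 := \min B$ and define
\[
B' := B - b_0 = \{b - b_0 : b \in B\}.
\]
Then $B'$ is a finite subset of $\mathbb{N}$ with the same cardinality as $B$ and with $\min B' = 0$. For any candidate set $A \subset \mathbb{N}$ one has the identity
\[
A + B \;=\; (A + B') + b_0,
\]
so $A+B$ is just a translate of $A+B'$ by the fixed nonnegative integer $b_0$.

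The key step is therefore to check that translating a subset $X$ of $\mathbb{N}$ by a fixed integer $b_0 \geq 0$ does not alter its asymptotic density. Writing $X + b_0 := \{x + b_0 : x \in X\}$, one has
\[
\bigl|(X+b_0) \cap [1,x]\bigr| \;=\; \bigl|X \cap [1-b_0,\, x-b_0]\bigr|,
\]
which differs from $X(x)$ by at most $b_0$. Dividing by $x$ and letting $x \to +\infty$ shows that $d(X+b_0) = dX$ whenever either side exists (and analogously for $\underline{d}$ and $\overline{d}$).

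Applying this to $X = A + B'$ with translation $b_0$ yields $d(A+B) = d(A+B')$. Consequently, if we can produce a set $A \subset \mathbb{N}$ such that $d(A+B') = \alpha$, the same $A$ automatically satisfies $d(A+B) = \alpha$. We may therefore assume from now on that $\min B = 0$. There is no real obstacle here; the entire content of the lemma is the translation-invariance of the asymptotic density, which is routine.
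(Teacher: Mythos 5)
Your proof is correct, and it is in fact a cleaner route than the one the paper takes. The paper also reduces to $B'=B-\min B$, but it then translates the solution set $A'$ \emph{downward} by $b_1=\min B$, which produces negative elements; it must therefore truncate to $A:=\{a\in A_1: a\ge 0\}$ and argue that $(A_1+B)\setminus(A+B)$ is contained in the finite set $\bigcup_{k=-1}^{-b_1}\{k+B\}$, so that the density is unaffected. You avoid this entirely by leaving $A$ alone and observing the identity $A+B=(A+B')+b_0$, so that the \emph{same} set $A$ that works for $B'$ works for $B$, by translation invariance of asymptotic density. Both arguments rest on the same underlying fact (density is unchanged under a bounded shift, or equivalently under modification by finitely many elements), but your decomposition sidesteps the bookkeeping about negative integers and finite exceptional sets. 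One small point worth stating explicitly: since $A+B'\subset\mathbb{N}$, the count $\bigl|(X+b_0)\cap[1,x]\bigr|=\bigl|X\cap[1-b_0,x-b_0]\bigr|$ differs from $X(x)$ by at most $b_0+1$ (the element $0$ may also enter), but this of course changes nothing in the limit.
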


\begin{proof} Suppose that the theorem is proved when $b_1:=\min B=0.$ Now let $b_1>0.$ Put $B':=B-b_1.$ By the case $\min B=0,$ there is $A'\subset\mathbb{N}$ such that $d(A'+B')=\alpha.$ Put $A_1:=A'-b_1\subset\{-b_1,-b_1+1,\dots,-1\}\cup \mathbb{N}$ and $A:=\{a\in A_1;a\geq0\}.$ Notice that
$$A+B\subset A_1+B=A'+B'\subset\mathbb{N}$$
so $A_1+B$ has asymptotic density $\alpha.$ Then notice that
$$(A_1+B)\setminus(A+B)\subset\cup_{k=-1}^{-b_1} \{k+B\}.$$
The set in the right hand member is finite. It yields that $d(A+B)=d(A_1+B)=\alpha$ which proves the Lemma.
\end{proof}
{\bf Continuation of the proof of Theorem \ref{thm1}.} We suppose from now on that $\min B=0.$ Let also $b:=\max B,$ $k:=|B|.$ Of course the theorem is trivial if $k=1.$ We suppose in the sequel that $k\geq2;$ consequently $b\ge1.$ The set $A=\{a_1,a_2,\dots\},$ $a_1<a_2<\cdots,$ will be defined recursively.
We define $a_1:=\min A$ in the following manner:
$$a_1:=\min\{a\ge0;~\max_{n\geq1}{{(a+B)(n)}\over n}\leq\alpha\}.\leqno(1)$$
{\it Remark on} (1). Notice that $a_1=0$ if and only if, for all $n\geq1,$ $B(n)\leq \alpha n.$ For given $a$ and all $n\geq1,$ let $f(n):={{(a+B)(n)}\over n}.$ The function $f$ takes nonnegative values and is decreasing when $n\ge{a+b};$ so, for fixed $a,$ the maximum exists. This maximum is attained for (at least) an $n=n_1,$ $a\le{n_1}\le{a+b},$ and verifies, when $a\neq 0,$
$$\max_{n\geq1}f(n)=f(n_1)\le{k\over{n_1}}\le{k\over a}.$$
So this maximum is less than or equal to $\alpha$ provided that $a\ge{k\over\alpha}.$ It follows that the minimum in formula (1) exists and hence $a_1$ is well defined and $a_1\le\lceil{k\over\alpha}\rceil.$

{\it Recursion.} Suppose that $a_1,a_2,\dots,a_m$  have been defined, and let $A_m:=\{a_1,a_2,\dots,a_m\}.$ We suppose that, for all $n\geq1,$
$${(A_m+B)(n)\over n}\leq\alpha.$$  Then we define $a_{m+1}$ in the following manner:
$$a_{m+1}:=\min\{a>{a_m};~\max_{n\geq a}{{((A_m\cup\{a\})+B)(n)}\over n}\leq\alpha\}.\leqno(2)$$
{\it Remark on} (2). Notice that $a_{m+1}=a_m+1$ if and only if $((A_m\cup\{a_m+1\})+B)(n)\leq \alpha n$  for all $n\geq{a_m+1}.$
Otherwise, $a_{m+1}>a_m+1.$
For fixed  $a>a_m,$  let $g(n):={{((A_m\cup\{a\})+B)(n)}\over n},$ for all $n\geq a. $ The function $g$ takes nonnegative values and is decreasing when $n\ge{a+b};$ so, for fixed $a,$ the maximum exists. This maximum is attained for (at least) an $n=n_{m+1},$ $a\le{n_{m+1}}\le{a+b},$ and verifies
$$\max_{n\geq a}g(n)=g(n_{m+1})\le{{k(m+1)}\over{n_{m+1}}}\le{{k(m+1)}\over a}.$$
So this maximum is less than or equal to $\alpha$ provided that $a\ge{{k(m+1)}\over\alpha}.$ It follows that the minimum in formula (2) exists and hence $a_{m+1}$ is well defined and $a_{m+1}\le\lceil{{k(m+1)}\over\alpha}\rceil.$ \\

Let us now observe that, for all $n\in\mathbb{N},$ we have
$$(A+B)(n)\le\alpha n. $$
This is clear when $n=0$ or $n<a_1.$ Otherwise, there is $m\ge1$ such that $a_m\le n<a_{m+1}.$ Then we have $(A+B)(n)=(A_m+B)(n)\le\alpha n.$

\noindent
This implies that ${\overline d}(A+B)\le\alpha$.\\

 It remains to prove that ${\underline d}(A+B)\ge\alpha.$ To do that, it is sufficient to prove that, for any $\varepsilon>0,$ we have ${\underline d}(A+B)\ge\alpha-\varepsilon.$

In what follows, we shall need the next property of the  counting function $n\mapsto(A+B)(n)$ of the set $A+B.$ In its (short) proof, we shall need the fact that $0\in B.$

{\bf Property (iii)} If $a\in A,$ $a>1,$ then $${{(A+B)(a)}\over a}\ge{{(A+B)(a-1)}\over {a-1}}.$$

\begin{proof} Let $y:=(A+B)(a-1).$ Since $a\in A$ and $0\in B,$ we get that $a=a+0\in A+B$ and so $(A+B)(a)=y+1.$ We have to show that $(y+1)/a\geq y/(a-1).$ The verification is straightforward and this proves Property (iii).
\end{proof}
{\bf Continuation of the proof of Theorem \ref{thm1}.} Let $0<\varepsilon<\alpha.$ We shall prove that  ${\underline d}(A+B)\ge\alpha-\varepsilon$ by contradiction. Suppose that ${\underline d}(A+B)<\alpha-\varepsilon.$ Then the set
$$S:=\{n\in\mathbb{N}; (A+B)(n)<(\alpha-\varepsilon)n\}$$ is infinite.

We observe that, for $0<\alpha<1,$ the constructed set $A$ is neither finite nor cofinite. $A$ is a collection of finite ``blocks", each block  consisting of one
or of  a finite number
of consecutive integers, and two consecutive blocks are separated by a ``hole" of length at least 2.

The preceding Property (iii) implies that if an element $a$ of $A$ , $a>1,$ belongs to $S$, then $a-1$ also belongs to $S.$ And since $a$ belongs to a block of $A,$ the last (the biggest) element of the hole just before the block to which $a$ belongs, is also an element of $S.$ We conclude that the set $S':=S\setminus A$ is infinite.

From $S'$ we can extract an infinite, strictly increasing, sequence of positive integers $(N_t)_{t\ge1}$ such that to each $t\ge1$ there  corresponds an index $m_t,$ verifying :

(i) $a_{m_t}<N_t<a_{m_t+1},$ and

(ii) $1\le{m_1} < m_2 < \cdots$ .

Let us fix now an index $t\ge1.$ Recall that
$$ (A+B)(N_t)=(A_{m_t}+B)(N_t)<(\alpha-\varepsilon)N_t~. \leqno(3)$$
Let $A':=A_{m_t}\cup\{N_t\}.$   By the formula (2), we get that
$$\max_{n\ge N_t}{1\over n}(A'+B)(n)>\alpha.$$
So there is an integer $n',$ $N_t\le n'\le N_t+b,$ such that
$$ (A'+B)(n')>\alpha n' ~.\leqno(4)$$
By the construction,
$$ (A_{m_t}+B)(n')\le \alpha n' ~.\leqno(5)$$
We observe that
$$ (A_{m_t}+B)(n')-(A_{m_t}+B)(N_t)\le n'- N_t\le b ~,$$
which implies, using (3), that
$$(A_{m_t}+B)(n')\le (A_{m_t}+B)(N_t)+b<(\alpha-\varepsilon)N_t+b ~.\leqno(6)  $$
We also observe that
$$ (A'+B)(n')-(A_{m_t}+B)(n')\le k ~,$$
which implies, using (4), that
$$ (A_{m_t}+B)(n')\ge(A'+B)(n')-k>\alpha n'-k\ge\alpha N_t-k~.\leqno(7)$$
The left members of (6) and (7) are the same. Comparing their right members, we get that $\varepsilon N_t < k+b.$

This is not true for any $t$, since $N_t$ tends to infinity. This implies that the hypothesis ${\underline d}(A+B)<\alpha-\varepsilon$ is false and completes the proof of Theorem \ref{thm1}.

\section{Proof of Theorem \ref{thm2}}
If $\alpha$ is 0 or 1, the answer is easy: take, respectively, $A$ to be the set of powers of 2 or $A=\mathbb{N}.$

\noindent
{\tt We shall suppose in the sequel that} $0<\alpha<1.$\\

We distinguish two cases

\noindent
$\clubsuit$ {\bf Case A}: $\alpha$ rational; say, $\alpha={m\over n},$ where $m,n$ are integers, $1\leq m\leq n-1.$ It is not necessary that $\gcd(m,n)=1.$ Our construction of such a set $A$ is {\tt simpler} (see also the remark at the end of the proof of Case A) when $m\geq3.$ So multiplying, if necessary, the terms of the fraction for $\alpha$ by 2 or by 3,
{\tt we will suppose in the sequel that}\;\;  $3\leq m\leq n-1.$\smallskip

Let $H:=\{0,1,\dots,m-2,m\}.$ We shall prove that the set
$$A:=\cup_{h\in H}(nk\cdot \mathbb{N}+h)$$ verifies $d(jA)=j\alpha/k,$ for all $j,$ $1\leq j\leq k.$ To prove that, let us first observe that
$$jA=nk\cdot \mathbb{N}+jH;$$ hereon it is easy to verify that each element of the left member belongs to the right member and vice versa.

Notice that $k~{\max H}=km<nk.$ The set $jA$ being a finite union of mutually disjoint arithmetic progressions of difference $nk,$ we have
$$d(jA)=\sum_{t\in jH}d(nk\cdot \mathbb{N}+t)=|jH|{1\over{nk}}.$$
But $jH=\{0,1,\dots,jm-2,jm\}$ because $jm=j~{\max H}\in jH,$ $jm-1\not\in jH,$ and every nonnegative integer less than $jm-1$ belongs to $jH.$ For example, $jm-2=(j-1)m+(m-2)\in jH$ ; or $jm-3=(j-1)m+(m-3)\in jH.$ So
 $$d(jA)=|jH|{1\over{nk}}={{jm}\over{nk}}={{j\alpha}\over k}~,~ 1\leq j\leq k~.$$
 \begin{rem}
 It is possible to invent specific constructions for $m=1$ and for $m=2.$
 \end{rem}

 \noindent
$\clubsuit$ {\bf  Case B}: $\alpha$ irrational, $0<\alpha<1.$

\noindent
We put $\theta:=1/\alpha.$ We recall our notation  $\mathbb{N}_+:=\mathbb{N}\setminus\{0\}=\{1,2,\dots\}.$ We shall prove that the set $$A:=\{\lfloor{n\theta}\rfloor;n\in \mathbb{N}_+,  \{n\theta\}<{1\over k}\}$$
verifies $d(jA)=j\alpha/k,$ for all $j,$ $1\leq j\leq k.$

For $j=1,$ this follows from Corollary \ref{cor1}.\\
 {\tt We suppose in the sequel that} $2\leq j\leq k.$\\

${\spadesuit}$ We firstly prove that ${\overline d}(jA)\leq j\alpha/k.$\\
\noindent
To do that, we begin by proving that $jA\subset T_j$ where
$$T_j:=\{\lfloor{m\theta}\rfloor; m\in \mathbb{N}_+,  \{m\theta\}<j/k \}.\leqno (8)$$
An element of $jA$ is of the form $a_1+\cdots+a_j$ where $a_i=\lfloor{n_i\theta}\rfloor\in A,$ $1\leq i\leq j.$ This yields

$a_1+\cdots+a_j=\lfloor{n_1\theta}\rfloor
+\cdots+\lfloor{n_j\theta}\rfloor={n_1\theta}
-\{n_1\theta\}+\cdots+{n_j\theta}-\{n_j\theta\}$

\noindent and consequently

$(n_1+\cdots+n_j)\theta=a_1+\cdots+a_j + \{n_1\theta\}+\cdots+\{n_j\theta\}.$

\noindent We have

$0<\{n_1\theta\}+\cdots+\{n_j\theta\}<j{1\over k}\leq1$,

\noindent and so $\{n_1\theta\}+\cdots+\{n_j\theta\}$ is the fractional part and $a_1+\cdots+a_j$ is the integer part of $(n_1+\cdots+n_j)\theta.$ In other terms and according to Definition (8), $a_1+\cdots+a_j$ belongs to $T_j.$ Since, by Corollary \ref{cor1}, $T_j$ has asymptotic density $j\alpha/k,$ the desired inequality follows.\smallskip

${\spadesuit}$  We shall now prove that ${\underline d}(jA)\geq j\alpha/k.$\\
\noindent
This will be done in the following way. We fix a real number $\varepsilon,$ $0<\varepsilon<{1\over{4k}},$ and we shall prove that
$${\underline d}(jA)\geq\alpha({j\over k}-\varepsilon).\leqno (9)$$ Taking the limit for
$\varepsilon$ tending to zero in (9) gives the desired inequality for ${\underline d}(jA).$

To prove (9), we introduce the set
$$B:=\{\lfloor{N\theta}\rfloor;N\in \mathbb{N}_+,{\varepsilon\over2}\leq\{N\theta\}<{j\over k}-{\varepsilon\over2}\}\leqno (10)$$ which, by Corollary \ref{cor1}, has asymptotic density $({j\over k}-{\varepsilon\over2}-{\varepsilon\over2})/\theta=\alpha({j\over k}-\varepsilon)$ and we will verify that {\tt almost all} (that is, all except a finite number) {\tt elements of} $B$ {\tt belong to} $jA;$ this implies (9).

Let $\ell:={j\over k}-\varepsilon,$ which is a positive real number $(\ell>{2\over k}-{1\over{4k}}>0)$ less than 1 $(\ell\leq{k\over k}-\varepsilon<1).$ We split the interval $[{\varepsilon\over2},{j\over k}-{\varepsilon\over2})$ into $j$ intervals of equal length $\ell/j$ :
$$[{\varepsilon\over2},{j\over k}-{\varepsilon\over2})=\cup_{i=0}^{j-1}I_i $$ where
$$I_i:=[{\varepsilon\over2}+{i\ell\over j},{\varepsilon\over2}+{(i+1)\ell\over j})~~,~~0\leq i\leq j-1~.$$
The set $B$ splits into $j$ sets $B=\cup_{i=0}^{j-1}B_i$, where $B_i:=\{\lfloor{N\theta}\rfloor;N\in \mathbb{N}_+, \{N\theta\}\in I_i\},$ and it will be sufficient (and necessary!) to prove that, for each $i,$ all except finitely many elements of $B_i$ lie in $jA.$ Here is the procedure.

First, one can easily verify that
$$0 \le{\varepsilon\over2}+{(i+1)\ell\over j}-{1\over k}<{\varepsilon\over2}+{i\ell\over j}.$$
By the uniform distribution modulo 1 of the sequence $(\{n\theta\})_n$ (Property (ii)), there is a positive integer $m_i$ such that
$${\varepsilon\over2}+{(i+1)\ell\over j}-{1\over k}<(j-1)\{m_i\theta\}<
{\varepsilon\over2}+{i\ell\over j}.\leqno (11)$$ We have that $\lfloor{m_i\theta}\rfloor$ belongs to $A$ since
$$\{m_i\theta\}<{1\over{j-1}}({\varepsilon\over2}+{i\ell\over j})\leq{1\over{j-1}}({\varepsilon\over2}+{(j-1)\ell\over j})={\varepsilon\over{2(j-1)}}+{1\over j}\ell=$$ $$={\varepsilon\over{2(j-1)}}+{1\over j}({j\over k}-\varepsilon)={1\over k}-\varepsilon({1\over j}-{1\over{2(j-1)}})= {1\over k}-\varepsilon{{j-2}\over{2j(j-1)}}\leq {1\over k}.$$
We shall prove that, for every $N>(j-1)m_i$ such that $\lfloor{N\theta}\rfloor$ belongs to $B_i,$ $\lfloor{N\theta}\rfloor$ belongs also to $jA.$

Since $\lfloor{N\theta}\rfloor$ belongs to $B_i,$ we have that
$${\varepsilon\over2}+{i\ell\over j}\leq\{N\theta\}<{\varepsilon\over2}+{(i+1)\ell\over j}.\leqno(12)$$
Putting (11) and (12) together, gives
$$0<\{N\theta\}-(j-1)\{m_i\theta\}<{1\over k}. \leqno (13)$$
From the equality
$N\theta=(N-(j-1)m_i)\theta+(j-1)m_i\theta,$ taking the integer part and the fractional part of each multiple of $\theta$, we get
$$\lfloor{N\theta}\rfloor+ \{N\theta\}-(j-1)\{m_i\theta\}=
 \lfloor{(N-(j-1)m_i)\theta}\rfloor+\{(N-(j-1)m_i)\theta\}+(j-1)\lfloor{m_i\theta}\rfloor. \leqno(14)$$

By the uniqueness of decomposition of a real number into its integer part and its fractional part, the inequality (13) implies that the fractional parts appearing in (14) verify

$$\{N\theta\}-(j-1)\{m_i\theta\}=\{(N-(j-1)m_i)\theta\} \leqno(15)$$ and this, combined with (14), gives that
$$\lfloor{N\theta}\rfloor=  \lfloor{(N-(j-1)m_i)\theta}\rfloor +(j-1)\lfloor{m_i\theta}\rfloor.   \leqno(16)$$ As observed before, $\lfloor{m_i\theta}\rfloor\in A.$ Because of (15) and (13), $\lfloor{(N-(j-1)m_i)\theta}\rfloor$ belongs to $A$ and (16) gives us that $\lfloor{N\theta}\rfloor\in jA.$ This completes the proof of (9) and of the whole Theorem \ref{thm2}.\\

{\bf Added in proof.-} In \cite{V} the author resolves in a more general context ($\mathbb{Z}^t$ instead of $\mathbb{N}$) a problem which, in some sense, contains as special case the problem solved in the above theorem. When $k=2,$ the meaning of our sentence ``in some sense" is as follows: Given two positive real numbers $\alpha_1$ and
 $\alpha_2$ such that  $\alpha_1+\alpha_2\leq1$ and a third real number $\gamma,$
$\alpha_1+\alpha_2\leq\gamma\leq1,$
Bodo Volkmann \cite{V} constructs sets $A_1, A_2$ of natural numbers satisfying $d(A_1)=\alpha_1$, $d(A_2)=\alpha_2$ and $d(A_1+A_2)=\gamma.$ The construction uses ideas similar to the ours. The principle is the same: to use uniform distribution of fractional parts in order to obtain sets of integers with prescribed density. In the case when $\alpha_1=\alpha_2,$ the sets $A_1,A_2$ are not equal. The set $A_1$ is constructed in a way similar to the one used in our proof, but the set $A_2$ is constructed in a specific way in order to obtain  $d(A_1+A_2)=\gamma.$ Even in the case when $\gamma=2\alpha_1=2\alpha_2$, the set $A_2$ is different from $A_1.$ We observed that in this particular case Volkmann's construction can be slightly modified to give $A_2=A_1$ thus providing another proof, based on Lemma 2 of \cite{V}, of our theorem. A similar
remark is valid when $k\geq3.$

\section{Future prospects}

We separate this section into questions: {\it Q1} to {\it Q7}.\smallskip

\paragraph{Q1 - Thin sets} Concerning the case $B=A,$ it would be interesting to study the existence of thin sets $A$ verifying $d(kA)=\alpha.$ For {\it additive bases} $A$ (that is, for sets $A$ verifying $kA= \mathbb{N}$ for some $k,$ called ``order" of the basis $A)$ this was done by Cassels \cite{C} (see also \cite{HR}, p. 35-43) where, for every $k\geq2,$ a ``thin" basis of order $k$ was found. As in the case of additive bases, in our situation, with $\alpha>0,$ the condition $A(x)\geq c x^{1/k}$ is necessary. In Cassels' construction, the basis corresponding to the order $k$ verifies $A(x)\leq c' x^{1/k}.$ The question here is to find sets $A$ verifying $d(kA)=\alpha>0$ and $A(x)\leq c'' x^{1/k}.$ Here is a related question: is it possible to extract from Cassels' thin basis $A$ (such that $kA= \mathbb{N})$ a set $A'\subset A$ such that $d(kA')=\alpha?$ If this is possible, the condition $A(x)\leq c'' x^{1/k}$ is automatically verified.

{\bf Remark.-} The above mentioned Cassels' thin bases allow to answer the above question when $\alpha=1/n.$ Take the Cassels asymptotic basis $C=\{c_1<c_2<\dots\}$ \cite{C} (see also \cite{HR} , p. 37) of order $n$. It verifies $c_m=\beta m^n+O(m^{n-1}).$ Now a solution to the above asked question is to take $A:=\{nx;x\in C\}.$ For other values of $\alpha$ the question remains open.

Concerning the case of a given finite set $B$ considered in this paper, we see two questions:
\begin{enumerate}

\item To evaluate the density (or the upper and the lower densities) of the greedily constructed
 set $A.$ The constructed set seems to be ``the thickest".

\item To determine the more thin set $A$ that verifies $d(A+B)=\alpha$: A necessary condition is that ${\underline d}A\geq\alpha/|B|.$

\end{enumerate}

\paragraph{Q2 - Thick sets} In the case of bases, the thickest set $A$ verifying $kA=\mathbb{N}$ is $A=\mathbb{N}.$ What are thick sets $A$ satisfying $d(kA)=\alpha$ when $\alpha<1?$ In the case $\alpha={1\over r},$ where $r$ is an integer, $r\geq 2,$ the answer is trivial: take $A=\{0,r,2r, 3r,\dots\}.$ But in general the answer is not obvious. It may depend on the nature of $\alpha$: rational or irrational.

\paragraph{Q3 - $B$ infinite} What happens for given infinite $B?$ Find necessary or sufficient (or both!) conditions on $B$ (on the upper and lower densities of $B$) such that $A$ exists. Our greedy method of Section 2, with some supplementary considerations on formulas (1) and (2), allows to construct a set $A$ such that ${\overline d}(A+B)\leq\alpha$ when $dB=0.$ But we are not able to prove that $d(A+B)=\alpha;$ nor to disprove it for a specific set $B.$

\paragraph{Q4 - Other densities} Replace asymptotic density by other densities. See \cite{G} for a list of definitions. For instance, the {\it exponential density}, defined as [compare with  Definition 1.1 of asymptotic density in Section 1]
$${\varepsilon}X:=\lim_{x\rightarrow+\infty}{{\log X(x)}\over {\log x}},$$
 could be of interest. That is, given $B$ and $\alpha,$ is there $A$ such that the exponential density of $A+B$ is equal to $\alpha?$ Instead of using specific (concrete) definitions of density, one could use axiomatically defined densities which generalize some of the usual concepts of density; see \cite{FS}, \cite{LT1} and \cite{LT2}. Concerning the analog of Theorem 1.3, the cases of Schniremann density and of {\it lower} asymptotic density were already considered (but only as for existence of a set $A$) in \cite{Le}, \cite{Ch} and \cite{Nat}, where best possible results to Mann's and Kneser's theorems are proved.

\paragraph{Q5 - Couple of densities} It is possible to consider the initial problem and to ask all the above questions replacing $\alpha$ by two real numbers $\alpha'$ and $\alpha'',$ $0\leq\alpha'\leq\alpha''\leq1,$ that will be, respectively, the lower and the upper densities. For instance, given $\alpha'$ and $\alpha''$ as above and $k\geq2,$ find a set $A$ such that ${\underline d}(kA)=\alpha'$ and ${\overline d}(kA)=\alpha''.$

\paragraph{Q6 - Generalization} Given a subset $B$ of $\mathbb{N}$, finite or of zero asymptotic density, a real number $\alpha$, $0\leq\alpha\leq1,$ and an integer $k\geq2$, is there a set $A\subset \mathbb{N}$ such that $d(B+kA)=\alpha$? Search for ``thin" and for ``thick" such sets $A.$

\paragraph{Q7 - Last but not least: $A$ and $B$} The more studied question on classical additive complements is to compare the functions $(A+B)(x)$ and $A(x)B(x).$ Obviously $(A+B)(x)\leq A(x)B(x).$ So to have ``thin" sets $A$ and $B$ means that
$A(x)B(x)$ is not much bigger than $(A+B)(x).$ In the classical case, $(A+B)(x)$ is equal, up to a constant, to $x.$ In our case, with $\alpha>0,$ $(A+B)(x)$ is ``equivalent" to $\alpha x:$ $\lim_{x\rightarrow+\infty}(A+B)(x)/{\alpha x}=1.$ So questions studied in
\cite{Nar},  \cite{D} or \cite{FC} and finally in  \cite{R} can be formulated with $\alpha x$ in place of $x.$

\normalsize
\baselineskip=17pt

\subsection*{Acknowledgements}
The authors are thankful

- to Salvatore Tringali for having suggested the idea used in the proof of Case A of Theorem \ref{thm2};

- to H{\'e}di Daboussi, W\l{}adys\l{}aw Narkiewicz, Jo{\"e}l Rivat, Andrzej Schinzel and Bodo Volkmann for fruitful discussions;

- to the anonymous referee for useful observations on the initial version.



\begin{thebibliography}{HD}

\bibitem [Ca]{C} J. W. S. Cassels, \emph{\"Uber Basen der nat\"urlichen Zahlenreihe}, Abhandlungen Math. Seminar Univ. Hamburg {21} (1957), 247--257.

\bibitem [Ch]{Ch} L. P. Cheo, \emph{A remark on the $\alpha+\beta$-theorem}, Proc. Amer. Math. Soc. 3 (1952), 175--177.

\bibitem [D]{D} L. Danzer, \emph{\"Uber eine Frage von G. Hanani aus der additiven Zahlentheorie}, J. Reine Angew. Math. {214/215} (1964), 392--394.

\bibitem [FC]{FC} J.-H. Fang and Y.-G. Chen,  \emph{On additive complements. III}, J. Number Theory {141} (2014), 83--91.

\bibitem [FS]{FS} A. R. Freedman and J. J. Sember, \emph{Densities and summability}, Pacific J. Math. {95} (1981), 293--305.

\bibitem [G]{G} G. Grekos, \emph{On various definitions of density (survey)}, Tatra Mt. Math. Publ. {31} (2005), 17--27.

\bibitem [HR]{HR} H. Halberstam and K. F. Roth., \emph{Sequences, volume I},  Second edition, Oxford 1966, Springer 1983.

\bibitem [KN]{KN} L. Kuipers and H. Niederreiter, \emph{Uniform distribution of sequences}, Dover, 2005.

\bibitem [Le]{Le} B. Lepson, \emph{Certain best possible results in the theory of Schniremann density}, Proc. Amer. Math. Soc. 1 (1950), 592--594.

\bibitem [Lo]{L} G. G. Lorentz, \emph{On a problem of additive number theory}, Proc. Amer. Math. Soc. {5} (1954), 838--841.

\bibitem [LT1]{LT1} P. Leonetti and S. Tringali, \emph{On the notions of upper and lower density}, Proc. Edinburgh Math. Soc., to appear

(DOI: https://doi.org/10.1017/S0013091519000208).

\bibitem [LT2]{LT2}  P. Leonetti and S. Tringali, \emph{Upper and lower densities have the strong Darboux property}, J. Number Theory {174} (2017), 445--455.

\bibitem [Nar]{Nar} W. Narkiewicz, \emph{Remarks on a conjecture of Hanani in additive Number Theory}, Colloq. Math. {7} (1959/1960), 161--165.

\bibitem [Nat]{Nat} M. B. Nathanson, \emph{Best possible results on the density of subsets}, in ``Analytic number theory", Proc. Conf. in honor of Paul T. Bateman, Urbana/IL (USA) 1989, Progr. Math. {85}, Birkh{\"a}user (1990), 395-403.

\bibitem [R]{R} I. Z. Ruzsa, \emph{Exact additive complements}, The Quarterly Journal of Mathematics {68} (2017), 227--235.

\bibitem [SP]{SP} O. Strauch and {\v S}. Porubsk{\'y}, \emph{Distribution of sequences: a sampler}, Peter Lang, 2005; corrected version on-line available at {\tt https://math.boku.ac.at/udt/}

\bibitem [V]{V} B. Volkmann, \emph{On uniform distribution and the density of sum sets}, Proc. Amer. Math. Soc. {8} (1957), 130--136.
\end{thebibliography}
\end{document}